\newcommand{\RR}{\mathbb{R}}
\newcommand{\symMat}{\mathbb{S}}
\newcommand{\EE}{\mathbb{E}}
\mathchardef\mhyphen="2D 
\DeclareMathOperator*{\argmin}{\arg\!\min}
\DeclarePairedDelimiter{\norm}{\lVert}{\rVert}
\DeclarePairedDelimiter{\abs}{\lvert}{\rvert}
\DeclareMathOperator{\rank}{rank}
\newtheorem{theorem}{Theorem}
\numberwithin{theorem}{section}
\newtheorem{lemma}[theorem]{Lemma}
\newcommand{\xsol}{X_\star}
\newcommand{\ysol}{y_\star}
\newcommand{\rsol}{r_\star}
\newcommand{\pval}{p_\star}
\newcommand{\dval}{d_\star}
\newcommand{\Amap}{\mathcal{A}}
\newcommand{\Ajmap}{\Amap^*}
\newcommand{\xsolset}{\mathcal{X}_\star}
\newcommand{\ysolset}{\mathcal{Y}_\star}
\newcommand{\dm}{n}
\newcommand{\ncons}{m}
\newcommand{\inprod}[2]{\langle #1, #2 \rangle}
\newcommand{\twonorm}[1]{\left\|#1\right\|_2}
\newcommand{\fronorm}[1]{\left\|#1\right\|_{\mbox{\tiny{F}}}}
\DeclareMathOperator{\dist}{dist_{\mbox{\tiny{F}}}}
\DeclareMathOperator{\tr}{trace}
\newcommand{\skcons}{3\sqrt{2}}
\let\originalleft\left
\let\originalright\right
\renewcommand{\left}{\mathopen{}\mathclose\bgroup\originalleft}
\renewcommand{\right}{\aftergroup\egroup\originalright}
\title{Bundle Method Sketching for Low Rank Semidefinite Programming}
\author{%
	Lijun Ding \\
	Cornell University\\
	Ithaca, NY 14850, USA \\
	\texttt{ld446@cornell.edu} \\
	\And
	Benjamin Grimmer \\
	Cornell University\\
	Ithaca, NY 14850, USA \\
	\texttt{bdg79@cornell.edu} \\
}
\begin{document}
	
	\maketitle
	
	\begin{abstract}
		In this paper, we show that the bundle method can be applied to solve semidefinite programming problems with a low rank solution without ever constructing a full matrix. To accomplish this, we use recent results from randomly sketching matrix optimization problems and from the analysis of bundle methods. Under strong duality and strict complementarity of SDP, our algorithm produces primal and the dual sequences converging in feasibility at a rate of $\tilde{O}(1/\epsilon)$ and in optimality at a rate of $\tilde{O}(1/\epsilon^2)$. Moreover, our algorithm outputs a low rank representation of its approximate solution with distance to the  optimal solution at most $O(\sqrt{\epsilon})$ within $\tilde{O}(1/\epsilon^2)$ iterations.
	\end{abstract}
	
	\section{Introduction}
We consider solving semidefinite problems of the following form:
\begin{equation}\label{p} \tag{P}
\begin{aligned}
& \underset{X\in\symMat^{\dm}\subset \RR^{\dm \times \dm}}{\text{maximize}}
& & \langle -C, X \rangle \\
& \text{subject to}
& & \Amap X = b \\
&&& X \succeq 0,
\end{aligned}
\end{equation}
where $C\in \symMat^{\dm}\subset \RR^{\dm \times \dm}$, $\Amap: \symMat^{\dm} \rightarrow \RR^{\ncons}$ being linear, and $b \in \RR^{\dm}$.
Denote the solution set as $\xsolset$. 
To accomplish the task of solving \eqref{p}, we consider the dual problem:
\begin{equation}\label{d} \tag{D}
\begin{aligned} 
& \underset{y\in\RR^{\ncons}}{\text{minimize}}
& & \langle -b, y \rangle \\
& \text{subject to}
& & \Amap^*y \preceq C,
\end{aligned}
\end{equation}
whose solution set is denoted as $\ysolset$. 
Then for all sufficiently large $\alpha$, e.g., larger than the trace of any solution $\xsol\in \xsolset$ \cite[Lemma 6.1]{ding2019optimal}, we can reformulate this as
\begin{equation}
\begin{aligned}\label{eq: penaltySDP}
& \underset{y\in\RR^{\ncons}}{\text{minimize}}
& & F(y) = \langle -b, y \rangle - \alpha \min\{\lambda_{\min}(C - \Ajmap y), 0\}.
\end{aligned}
\end{equation}

We propose applying the bundle method to solve this problem, which generates a sequence of dual solutions $y_t$. While the bundle method runs on the dual problem, a primal solution $X_t$ can be constructed through a series of rank one updates corresponding to subgradients of $F$. 
However maintaining such a primal solution greatly increases memory costs.
Fortunately, the primal problem enjoys having a low 
rank solution in many applications, e.g., matrix completion \cite{srebro2005rank} and phase retrieval \cite{candes2013phaselift}. Also, without specifying the 
detailed structure the problem, there always exists a solution $\xsol$ to \eqref{p} with rank $\rsol$ satisfying $\frac{\rsol(\rsol+1)}{2}\leq \ncons$ \cite{pataki1998rank}

To utilize the existence of such a low rank solution, we employ the matrix sketching methods introduced in \cite{tropp2017practical} and specifically for optimization algorithm in \cite{yurtsever2017sketchy}. The main idea is the following:
the skecthing method forms a linear sketch of the column and row spaces of the primal decision variable $X$, and
then uses the sketched column and row spaces to recover the primal decision variable. The
recovered decision variable approximates the original $X$ well if $X$ is (approximately) low rank. 
Notably, we need not store the entire decision variable $X$ at each iteration, but only the
sketch. Hence the memory requirements of the algorithm are substantially reduced.

\paragraph{Our Contributions.}
Our proposed sketching bundle method produces a sequence of dual solutions $y_k$ and a sequence of primal solutions $X_t$ (which are sketched by low rank matrices $\hat{X}_t$).
This is done without ever needing to write down the solutions $X_t$, which can be a substantial boon to computational efficiency.

In particular, we consider problems satisfying the following pair of standard assumptions: (i)~{\bf Strong duality} holds, meaning that there is a solution pair $(\xsol,\ysol)\in \xsolset\times \ysolset$ satisfying
\[
\pval := \inprod{-C}{\xsol } = \inprod{-b}{\ysol}=:\dval
\]
and (ii)~{\bf Strict Complementarity} holds, meaning there is a solution pair $(\xsol,\ysol)$ satisfying
\[
\rank(\xsol)+\rank(C-\Ajmap(\ysol))=\dm. 
\]
Under these condtions, we show $X_t$ and $y_t$ converge in terms of primal and dual feasibility at a rate of $\widetilde O(1/t)$ and the optimality gap converges at a rate of $\widetilde O(1/\sqrt{t})$. In particular, all three of these quantities converge .
\begin{theorem}[Primal-Dual Convergence]\label{thm:rates}
	Suppose the sets $\xsolset,\ysolset$ are both compact, and that strong duality and a strict complementary condition holds. For any $\epsilon>0$, Algorithm~\ref{alg:aggregation} with properly chosen parameters produces a solution pair $X_t$ and $y_t$ with
	$$t \leq \widetilde O\left(1/\epsilon\right)$$
	that satisfies
	\begin{align*}
		\text{approximate primal feasibility: }& \quad\|b - \Amap X_{t}\|^2 \leq \epsilon, \quad  X_{t}\succeq 0,\\
		\text{approximate dual feasibility: }& \quad \lambda_{\min}(C - \Ajmap y_{t})\geq -\epsilon,\\
		\text{approximate primal-dual optimality: }&\quad  |\langle b, y_t \rangle - \langle C, X_t\rangle| \leq \sqrt{\epsilon}
	\end{align*}
\end{theorem}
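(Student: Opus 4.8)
The plan is to run the proximal bundle method on the exact-penalty reformulation $F$ of \eqref{eq: penaltySDP}, whose minimum value is $\dval=\pval$ by strong duality together with the exact-penalty property guaranteed for large $\alpha$, and to extract the three certificates from the two objects the method maintains at every step: the \emph{aggregate subgradient} and the \emph{cutting-plane model}. The starting observation is the subgradient structure of $F$: whenever $\lambda_{\min}(C-\Ajmap y)<0$ with unit bottom eigenvector $v$, one has $-b+\alpha\,\Amap(vv^\top)\in\partial F(y)$, and $-b\in\partial F(y)$ when $y$ is dual feasible. Hence the aggregate subgradient at iteration $t$ is a convex combination of such vectors, which I will write as $g_t=\Amap X_t-b$ with $X_t=\alpha\sum_i\theta_i\,v_iv_i^\top$, $\theta_i\ge0$, $\sum_i\theta_i\le1$; in particular $X_t\succeq0$ and $\tr(X_t)\le\alpha$ automatically. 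Three identities then do all the work: (i) $\|g_t\|=\|\Amap X_t-b\|$, so the aggregate subgradient norm \emph{is} the primal feasibility residual; (ii) the linearization constants sum to $-\langle C,X_t\rangle$, so the model value at the subproblem solution $y_t$ equals $\check F_t(y_t)=-\langle C,X_t\rangle+\langle g_t,y_t\rangle$, i.e.\ it reads off the primal objective; and (iii), combining these with $F(y_t)=\langle -b,y_t\rangle-\alpha\min\{\lambda_{\min}(C-\Ajmap y_t),0\}$, the \emph{bundle gap} satisfies the exact identity
\[
\delta_t:=F(y_t)-\check F_t(y_t)=\langle X_t,\,C-\Ajmap y_t\rangle-\alpha\min\{\lambda_{\min}(C-\Ajmap y_t),0\}.
\]
Crucially, the proof uses $X_t$ only through $\Amap X_t$ and $\langle C,X_t\rangle$, which the algorithm maintains through its sketch without ever forming $X_t$.

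Next I would invoke the convergence theory of the proximal bundle method to drive $\delta_t$, $\|g_t\|$, and $\mathrm{dist}(y_t,\ysolset)$ to zero. The essential regularity input --- and what I expect to be the main obstacle --- is a growth (sharpness/quadratic-growth) bound for $F$ around $\ysolset$: under strong duality and strict complementarity with $\xsolset,\ysolset$ compact, strict complementarity fixes the rank of $C-\Ajmap\ysol$ and hence the local conditioning of the bottom-eigenvalue penalty, which I will use to establish such a bound. Feeding this into the bundle analysis should yield, within $t\le\widetilde O(1/\epsilon)$ iterations, an iterate at which $\|g_t\|^2\le\epsilon$, the gap $\delta_t$ is of order $\sqrt\epsilon$, and $\mathrm{dist}(y_t,\ysolset)\le\epsilon/L$, where $L$ is the Lipschitz constant of $y\mapsto\lambda_{\min}(C-\Ajmap y)$. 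The delicate point is matching these three scales so that a single iteration count simultaneously delivers feasibility at level $\sqrt\epsilon$, dual infeasibility at level $\epsilon$, and optimality at level $\sqrt\epsilon$.

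Granting these guarantees, the three certificates follow. Approximate primal feasibility is immediate from identity (i): $X_t\succeq0$ and $\|b-\Amap X_t\|^2=\|g_t\|^2\le\epsilon$. For approximate dual feasibility I would pick $\ysol\in\ysolset$ with $\|y_t-\ysol\|\le\epsilon/L$; since $C-\Ajmap\ysol\succeq0$ and $\lambda_{\min}(C-\Ajmap\cdot)$ is $L$-Lipschitz, $\lambda_{\min}(C-\Ajmap y_t)\ge-L\|y_t-\ysol\|\ge-\epsilon$, so in particular the penalty term $P_t:=-\alpha\min\{\lambda_{\min}(C-\Ajmap y_t),0\}$ satisfies $P_t\le\alpha\epsilon$. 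For approximate optimality I would rearrange identity (iii) as $\langle b,y_t\rangle-\langle C,X_t\rangle=-\langle X_t,C-\Ajmap y_t\rangle-\langle g_t,y_t\rangle=P_t-\delta_t-\langle g_t,y_t\rangle$, so that
\[
|\langle b,y_t\rangle-\langle C,X_t\rangle|\le P_t+\delta_t+\|g_t\|\,\|y_t\|\le\alpha\epsilon+O(\sqrt\epsilon)+\sqrt\epsilon\,\|y_t\|=O(\sqrt\epsilon),
\]
where $\|y_t\|$ is bounded using the compactness of $\ysolset$, which keeps the bundle iterates in a bounded region.

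Finally, all constants entering the $\widetilde O$ depend only on $\alpha$, the diameters of $\xsolset,\ysolset$, and the growth constant of $F$, so rescaling $\epsilon$ by these constants preserves the stated rate. And since $X_t$ enters only through $\Amap X_t$ and $\langle C,X_t\rangle$, sketching $X_t$ by $\hat X_t$ affects memory but not the error analysis of this theorem; the reconstruction error enters only the separate $O(\sqrt\epsilon)$ distance-to-$\xsolset$ guarantee.
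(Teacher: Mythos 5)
Your overall skeleton is the same as the paper's: run the bundle method on the penalized dual $F$, identify the aggregate model $\bar F^t$ with a primal iterate $X_t=\alpha\sum_i\theta_i v_iv_i^*$ so that the aggregate subgradient is $\Amap X_t-b$ and the model constant is $-\langle C,X_t\rangle$, decompose the duality gap as $\langle X_t,\Ajmap y_t-C\rangle+\langle b-\Amap X_t,y_t\rangle$, and then drive everything by the function-value gap $F(y_t)-F(\ysol)$, whose $\widetilde O(1/\epsilon)$ decay the paper imports from Du--Ruszczy\'nski together with a quadratic-growth bound obtained from strict complementarity (Sturm's error bounds). Your identities (i)--(iii) and your treatment of primal feasibility and of the optimality gap match the paper's Lemmas on primal feasibility and primal--dual optimality essentially line for line.

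The genuine gap is in your dual-feasibility certificate. You propose to get $\lambda_{\min}(C-\Ajmap y_t)\ge-\epsilon$ from $\mathrm{dist}(y_t,\ysolset)\le\epsilon/L$ and the Lipschitz continuity of $y\mapsto\lambda_{\min}(C-\Ajmap y)$. But the only quantitative handle you have on $\mathrm{dist}(y_t,\ysolset)$ is the quadratic growth bound $\mathrm{dist}^2(y_t,\ysolset)\le\gamma_1\left(F(y_t)-F(\ysol)\right)$, so reaching $\mathrm{dist}(y_t,\ysolset)\le\epsilon/L$ forces the function gap down to $O(\epsilon^2)$, which under the $\widetilde O(1/\delta)$ rate for gap $\delta$ costs $\widetilde O(1/\epsilon^2)$ iterations --- not the claimed $\widetilde O(1/\epsilon)$. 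You flag the scale-matching issue yourself but do not resolve it, and as routed it cannot be resolved. The fix is the paper's exact-penalization argument, which bounds dual infeasibility \emph{linearly} in the function gap with no detour through $\mathrm{dist}(y_t,\ysolset)$: for any $\xsol\in\xsolset$, strong duality gives
\begin{equation*}
\langle b, y_t-\ysol\rangle=\langle \xsol,\Ajmap y_t-C\rangle\le-\tr(\xsol)\min\{\lambda_{\min}(C-\Ajmap y_t),0\},
\end{equation*}
and since $\alpha\ge 2\tr(\xsol)$,
\begin{equation*}
F(y_t)-F(\ysol)=\langle -b,y_t-\ysol\rangle-\alpha\min\{\lambda_{\min}(C-\Ajmap y_t),0\}\ge-\tr(\xsol)\min\{\lambda_{\min}(C-\Ajmap y_t),0\}.
\end{equation*}
Hence $\lambda_{\min}(C-\Ajmap y_t)\ge-(F(y_t)-F(\ysol))/\tr(\xsol)$, and a function gap of order $\epsilon$ already certifies $\epsilon$-dual-feasibility within $\widetilde O(1/\epsilon)$ iterations. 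This also simplifies your optimality bound, since $P_t\le\alpha\epsilon$ then follows from the same inequality rather than from a distance estimate.
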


Moreover, we show that assuming all of the minimizers in $\xsolset$ are low rank, the sketched primal solutions $\hat{X}_t$ converge to the set of minimizers at the following rate.
\begin{theorem}[Sketched Solution Convergence] \label{thm:sketch}
	Suppose the sets $\xsolset,\ysolset$ are both compact, strong duality and a strict complementary condition holds, and all solutions $\xsol\in \xsolset$ have rank at most $r$. For any $\epsilon>0$, Algorithm~\ref{alg:aggregation} with properly chosen parameters produces a sketched primal solution $\hat{X}_t$ with
	$$t \leq \widetilde O(1/\epsilon^2)$$
	that satisfies $\EE \dist(\hat{X}_t,\xsolset) \leq \sqrt{\epsilon}.$
\end{theorem}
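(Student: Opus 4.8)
The plan is to bootstrap from Theorem~\ref{thm:rates}. The first theorem already establishes that the primal iterate $X_t$ achieves approximate primal feasibility, approximate primal-dual optimality, and (for free, from the construction) positive semidefiniteness, all after $\widetilde O(1/\epsilon)$ iterations. The key conceptual step is to convert these three approximate-optimality certificates into a genuine distance bound $\dist(X_t,\xsolset)$. This is where strict complementarity earns its keep: under strong duality plus strict complementarity, one expects a quantitative \emph{growth} (error-bound / quadratic-growth) inequality of the form $\dist(X,\xsolset)^2 \lesssim (\text{primal infeasibility}) + (\text{optimality gap})$ for feasible-ish $X\succeq 0$. So first I would isolate and prove such a growth bound as a lemma, showing that the combination of $\|b-\Amap X\|$ small and $\langle C,X\rangle$ near $\pval$ forces $X$ to be close to $\xsolset$ in Frobenius norm, with the constant depending on the dual solution $\ysol$ and the spectral gap $\lambda_{\min}(C-\Ajmap\ysol)$ guaranteed positive on the complementary subspace by strict complementarity.

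\emph{Second}, I would combine this growth bound with the rates from Theorem~\ref{thm:rates}. Since that theorem delivers $\|b-\Amap X_t\|^2\le\epsilon$ and optimality gap $\le\sqrt\epsilon$ after $t\le\widetilde O(1/\epsilon)$ iterations, plugging these into a growth inequality of the above type yields $\dist(X_t,\xsolset)^2\lesssim \sqrt\epsilon$, i.e. $\dist(X_t,\xsolset)\lesssim \epsilon^{1/4}$. To reach the claimed bound $\sqrt\epsilon$ on the distance, I would rescale: run Theorem~\ref{thm:rates} at accuracy $\epsilon'=\epsilon^2$ rather than $\epsilon$, which costs $t\le\widetilde O(1/\epsilon')=\widetilde O(1/\epsilon^2)$ iterations and produces $\dist(X_t,\xsolset)\lesssim (\epsilon^2)^{1/4}=\sqrt\epsilon$. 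This accounts precisely for the jump from the $\widetilde O(1/\epsilon)$ iteration count in Theorem~\ref{thm:rates} to the $\widetilde O(1/\epsilon^2)$ count here.

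\emph{Third}, I must pass from the true primal iterate $X_t$ to the \emph{sketched} reconstruction $\hat X_t$, and this is where the rank hypothesis and the expectation enter. The sketching machinery of \cite{tropp2017practical,yurtsever2017sketchy} reconstructs $\hat X_t$ from a low-dimensional sketch of $X_t$ and guarantees a bound of the form $\EE\|\hat X_t - X_t\|_{\mathrm F} \lesssim (\text{best rank-}r\text{ approximation error of }X_t)$. Since every solution in $\xsolset$ has rank at most $r$, and $X_t$ is by the previous step close to $\xsolset$, the best rank-$r$ approximation error of $X_t$ is itself controlled by $\dist(X_t,\xsolset)$: intuitively $X_t$ is near a rank-$r$ matrix, so its tail singular values are small. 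Triangle inequality then gives $\EE\dist(\hat X_t,\xsolset)\le \EE\|\hat X_t-X_t\|_{\mathrm F}+\dist(X_t,\xsolset)\lesssim \sqrt\epsilon$, after choosing the sketch size large enough (depending on $r$) to make the reconstruction constant benign.

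\emph{The main obstacle} I anticipate is establishing the growth/error-bound inequality in the first step with the right power, since everything downstream hinges on the exponent relating $\dist(X,\xsolset)$ to the feasibility and optimality residuals. Strict complementarity is exactly the standard hypothesis under which such a sharp (here, effectively quadratic) growth bound holds, so I would look to adapt existing SDP error-bound arguments (e.g.\ via the dual certificate $\ysol$ and the nondegenerate spectral gap) rather than reinvent them; the delicate part is tracking how the reconstruction error of the sketch interacts with the ``near low-rank'' structure of $X_t$ so that the rank-$r$ tail truly vanishes at the rate $\dist(X_t,\xsolset)$ rather than something weaker.
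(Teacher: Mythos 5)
Your proposal is correct and follows essentially the same route as the paper: a triangle inequality splitting $\EE\dist(\hat X_t,\xsolset)$ into the sketching reconstruction error (controlled by the best rank-$r$ approximation error, hence by $\dist(X_t,\xsolset)$ via the rank assumption) plus $\dist(X_t,\xsolset)$, then a quadratic-growth error bound under strict complementarity (the paper invokes Sturm's SDP error bounds, exactly the existing result you propose to adapt), and finally the rescaling $\epsilon'=\epsilon^2$ in Theorem~\ref{thm:rates} that accounts for the $\widetilde O(1/\epsilon^2)$ iteration count. No gaps worth flagging.
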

	\section{Defining The Sketching Bundle Method} \label{sec:def}
Our proposed proximal bundle method relies on an approximation of $F$ given by $\widetilde F^{t}(y) = \max\{F(z_t) + \langle g_t, y-z_t\rangle, \bar{F}^t(y)\}$ where $\bar{F}^t(y)$ is a convex combination of the lower bounds from previous subgradients. 
Notice that a subgradient $g_k$ of $F$ at $z_t$ can be computed as
\begin{align}
v_t &= \begin{cases} \mathtt{MinEigenvector}(C - \Ajmap z_t) & \quad \text{if } \lambda_{\min}(C-\Ajmap z_t)\leq 0 \label{eq:eigenvector-computation}\\
0 &\quad \text{otherwise} \end{cases}\\
g_k &= -b +\alpha \Amap v_tv_t^* \in \partial F(z_t).\label{eq:subgradient-computation}
\end{align}

Each iteration then computes a proximal step on this piecewise linear function given by
\begin{equation} \label{eq:subproblem}
	z_{t+1} \in \argmin_{y\in\RR^d} \widetilde F^t(y) + \frac{\rho}{2}\|y-y_t\|^2
\end{equation}
for some $\rho>0$. The optimality condition of this subproblem ensures that for some $\theta_t\in[0,1]$
$ 0=\theta_t g_t + (1-\theta_t)\left(\nabla\bar{F}^t(z_t) + \rho(z_{t+1}-y_t)\right). $
Our aggregate lower bound $\bar{F}^t(y)$ is updated to match this certifying subgradient
$ \bar{F}^{t+1}(y) = \bar{F}^t(z_{t+1}) + \langle\theta_t g_t + (1-\theta_t)\nabla\bar{F}^t(z_t), y-z_t\rangle.$
Then the following is an exact solution for the subproblem~\eqref{eq:subproblem}:
\begin{align}
	\theta_{t} &= \min\left\{1, \frac{\rho(F(z_t)-\bar{F}^t(z_t))}{\|g_t-\nabla\bar{F}^t(z_t)\|^2}\right\}\label{eq:theta}\\
	z_{t+1} &= y_t - \frac{1}{\rho}\left(\theta_t g_t +(1-\theta_t)\nabla \bar{F}^t(z_t)\right).\label{eq:subproblem-solution}
\end{align}

If the decrease in value of $F$ from $y_t$ to $z_{t+1}$ is at least $\beta$ fraction of the decrease in value of $\widetilde F^t$ from $y_t$ to $z_{t+1}$, then the bundle method sets $y_{t+1} = z_{t+1}$ (called a descent step). Otherwise the method sets $y_{t+1} = y_t$ (called a null step).
\begin{algorithm} 
	\caption{Bundle Method with Cut Aggregation} \label{alg:aggregation}
	\begin{algorithmic}
		\STATE {\bf Input:}  $A\colon\RR^{n\times n}\rightarrow \RR^d$, $b\in\RR^d$, $C\in\RR^{n\times n}$ , $y_1\in\RR^d$, $\rho>0,\ \beta\in(0,1),\ T\geq0$, $\alpha>0$
		\STATE Define $\bar{F}^1(\cdot)=-\infty$, $z_1=y_1$, $X_1=0$
		\FOR{$t=1, 2,\dots T$}
		\STATE Compute $v_t$ and $g_t$ as in~\eqref{eq:eigenvector-computation} and~\eqref{eq:subgradient-computation} \hfill {\it Compute One Subgradient}
		\STATE Compute $\theta_t$ and $z_{t+1}$ as in~\eqref{eq:theta} and~\eqref{eq:subproblem-solution} \hfill {\it Compute One Step}
		\STATE {\bf if} $F(z_{t+1}) \leq F(y_t) - \beta\left(F(y_t) - \widetilde F^k(z_{t+1})\right)$ {\bf then} $y_{t+1} = z_{t+1}$ \hfill {\it Take Descent Step}
		\STATE {\bf else} $y_{t+1} = y_{t}$ {\bf end if} \hfill {\it Take Null Step}
		\STATE Set $\bar{F}^{t+1}(y) = \bar{F}^t(z_{t+1}) + \langle\theta_t g_t + (1-\theta_t)\nabla\bar{F}^t(z_t), y-z_t\rangle$ \hfill {\it Update Model of $F$}
		\ENDFOR
	\end{algorithmic}
\end{algorithm}

\paragraph{Extracting Primal Solutions Directly.}
A solution to the primal problem~\eqref{p} can be extracted from the sequence of subgradients $g_k$ (as these describe the dual of the dual problem). Set our initial primal solution to be $X_{0}=0$. When each iteration updates the model $\bar F^t$, we could update our primal variable similarly:
\begin{align}
X_{t+1} &= \theta_t\alpha v_{t}v_{t}^* + (1-\theta_t)X_t. \label{eq:direct-primal-recurrence}
\end{align}
As stated in Theorem~\ref{thm:rates}, these primal solutions $X_{t}$ converge to optimality and feasibility at a rate of $O(1/t)$.
Alas this approach requires us to compute and store the full $n\times n$ matrix $X_t$ at each iteration. Assuming every solution is low rank, an ideal method would only require $O(nr)$ memory. The following section shows matrix sketching can accomplish this.

\paragraph{Extracting Primal Solutions Via Matrix Sketching.}
Here we describe how the matrix sketching method of~\cite{tropp2017practical} can be used to store an approximation of our primal solution $X_t\in \symMat^{n}_+$.
First, we draw two matrices with independent standard normal  entries
\begin{equation}
\begin{aligned}
\Psi \in \RR^ {n \times k} \quad \text{with} \quad k=2r+1; \\
\Phi \in \RR ^{l\times \dm} \quad \text{with} \quad l=4r+3;
\end{aligned}\nonumber
\end{equation} 
Here $r$ is chosen by the user. It either represents the estimate of the true rank of the primal solution or the user's computational budget in dealing with larges matrices. 

We use $Y^C_t$ and $Y^R_t$ to capture the column space  and the row space  of $X_t$:
\begin{align}\label{eqn: onetimeSketch}
Y^C_t = X_t\Psi \in \RR^{\dm \times k},\qquad  Y^R_t =\Phi X_t \in \RR^{l\times n} .
\end{align}
Hence we initially have $Y^C_0=0$ and $Y^R_0=0$.
Notice Algorithm~\ref{alg:aggregation} does not observe the matrix $X_t$ directly. Rather, it observes a stream of rank one updates
\[ X_{t+1} = \theta_t\alpha v_{t}v_{t}^*+ (1-\theta_t)X_t.\] In this setting, $Y^C_{t+1}$ and $Y^R_{t+1}$ can be directly computed as
\begin{align}
Y^C_{t+1} = \theta_t\alpha v_{t}\left(v_{t}^*\Psi\right) + (1-\theta_t)Y^C_t \in \RR^{\dm \times k }, \label{eq:sketch-primal-update1}\\
Y^R_{t+1} = \theta_t\alpha\left(\Psi v_{t}\right)v_{t}^* + (1-\theta_t)Y^R_t\in \RR^{l\times n} . \label{eq:sketch-primal-update2}
\end{align}
This observation allows us to form the sketch $Y^C_t$ and $Y^R_t$ from the stream of updates.

We then reconstruct $X_t$ and get the reconstructed matrix $\hat{X}_t$ by
\begin{align}\label{eqn: reconstructionsketch}
Y^C_t = Q_tR_t, \quad  B_t = (\Phi Q_t)^{\dagger} Y^R_t, \quad \hat{X}_t= Q_t[B_t]_r,
\end{align}
where $Q_tR_t$ is the $QR$ factorization of $Y^C_t$ and $[\cdot]_r$ returns the best rank $r$ approximation in Frobenius norm.
Specifically, the best rank $r$ approximation of a matrix $Z$ is $U\Sigma V^*$,
where $U$ and $V$ are right and left singular vectors corresponding to the $r$
largest singular values of $Z$ and $\Sigma$ is a diagonal matrix with $r$ largest singular values of $Z$. In actual implementation, we 
may only produce the factors $(QU, \Sigma, V)$ defining $\hat{X}_T$ in the end instead reconstructing $\hat{X}_t$ in every iteration.

Hence the expensive primal update~\eqref{eq:direct-primal-recurrence} can be replaced by much more efficient operations~\eqref{eq:sketch-primal-update1} and~\eqref{eq:sketch-primal-update2}. Then a low rank approximation of $X_t$ can be computed by~\eqref{eqn: reconstructionsketch}.

We remark that 
the reconstructed matrix $\hat{X}_t$ is not necessarily positive definite. However, this suffices for the purpose of finding a matrices close to $X_t$. More sophisticated procedure is available for producing a positive semidefinite approximation of $X_t$ \cite[Section 7.3]{tropp2017practical}.

	\section{Numerical Experiments}

In this section, we demonstrate Algorithm \ref{alg:aggregation} equipped with the sketching procedure does solve problem instances in four important problem classes: (1)  Generalized eigenvalue \cite[Section 5.1]{boumal2018deterministic}, (2) $\mathbb{Z}_2$ synchronization \cite{bandeira2018random}, (3) Max-Cut \cite{goemans1995improved}, and (4) matrix completion\cite{srebro2005rank}. For all experiments, we set $\beta =\frac{1}{4}$, $\rho=1$, and $\alpha = 2\inprod{I}{\xsol}$ where $\xsol$ is computed via MOSEK \cite{mosek2010mosek} or prior knowledge of the problem. We present the results of convergences in Figure \ref{fig:totalfigure} for the following problem instances (results for other problems instances are similar for each problem class): Let $\mathcal{D}$ be the distribution of symmetric matrices in $\symMat^{800}$ with upper triangular part (including the diagonal) being independent standard Gaussians.
\begin{enumerate}
	\item Generalized eigenvalue (\textbf{GE}): $C\sim \mathcal{D}$, $\mathcal{A}(X) = \inprod{B}{X}$ for any $X$, where $B = I + \frac{1}{n}WW^*$ and $W\sim \mathcal{D}$ and is independent of $C$, and $b=1\in \RR$.
	\item $\mathbb{Z}_2$ synchronization (\textbf{Z2}): $C= J +\frac{1}{2\sqrt{5}}W$ where $J\in \symMat^{800}$ is the all one matrix and $W\sim  \mathcal{D}$, $\Amap = \mathbf{diag}$, and $b\in \RR^{800}$ is the all one vector. 
	\item Max-Cut(\textbf{MCut}): $C=-L$ where $L$ is the Laplacian matrix of the G1 graph \cite{Gset} with $800$ vertices, $\Amap$ and $b$ are the same as $\mathbb{Z}_2$ synchronization
	\item Matrix  Completion(\textbf{MComp}): A random rank $3$ matrix $X^\natural\in \symMat^{400}$ is generated. The index set $\Omega \subset  \{1,\dots,400\}\times \{1,\dots,400\}=:[400]^2$ is generated in a way that each  $(i,j)\in [400]^2$ is in $\Omega$ with probability $0.2$ independently from anything else. Set $C = I\in \symMat^{800}$. The linear constraint is $ X_{n+i,n+j} = X^\natural_{i,j} $ for each $(i,j)\in \Omega$. So $[\Amap(X)]_{i,j} =X_{i+n,j+n}$ and $b_{i,j} =X^\natural_{i,j}$ for each $(i,j)\in \Omega$.
\end{enumerate}
As can be seen from the experiments, with $2000$ iterations or less, the dual and primal objective converges fairly fast except the matrix completion problem. The infeasibility measured by $\norm{\Amap X-b}$ and distance to solution is about $10^{-2}$ for most of the problems except \textbf{GE}. In general, we note the  convergence is quicker when the problems have rank $1$ optimal solutions (\textbf{GE} and \textbf{Z2}) comparing to problems with higher rank optimal solutions (\textbf{MCut} and \textbf{MComp}).
\begin{figure}[ht]
	\centering
	\includegraphics[width=0.8\linewidth]{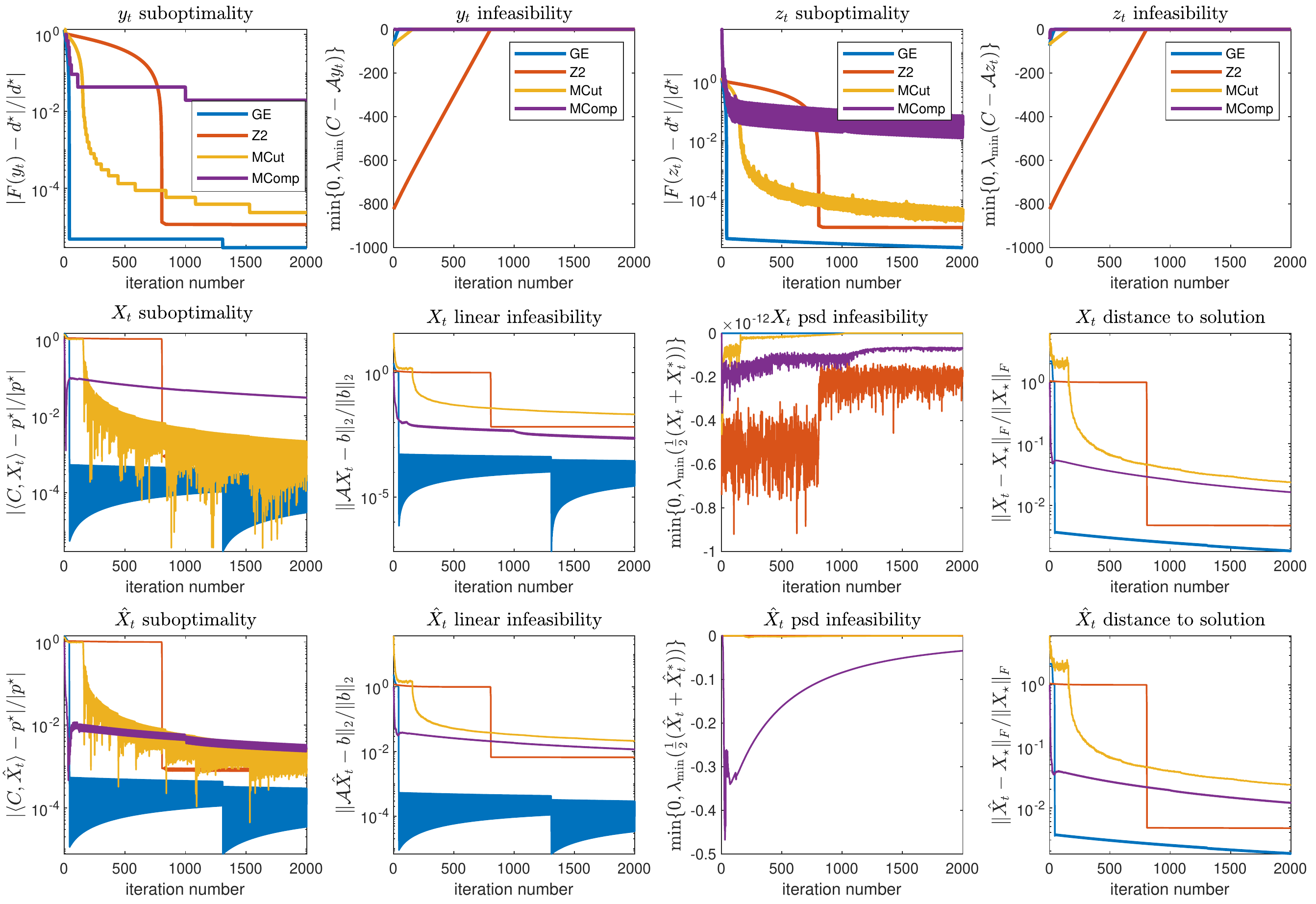}
	\caption{The upper, middle, and lower plots give the different convergence measures (as indicated on the title) for the dual iterates $z_t$ and $y_t$, the primal iterates $X_k$, and the sketching iterate $\hat{X}_t$ respectively. The color corresponds to different problem instances as indicated in the legend of the upper plots. Due to numerical error in updating $X_t$ and non-asymmetry of $\hat{X}_t$, we compute the minimum eigenvalue of their symmetrized version. $\lambda_{\min}(\frac{1}{2}(X_t+X_t'))$ appears to be negative but very small due to numerical error.}
	\label{fig:totalfigure}
\end{figure}

	
	\bibliographystyle{plain}
	{\small \bibliography{references}}

\begin{thebibliography}{10}

\bibitem{Gset}
The university of florida sparse matrix collection: Gset group.

\bibitem{bandeira2018random}
Afonso~S Bandeira.
\newblock Random laplacian matrices and convex relaxations.
\newblock {\em Foundations of Computational Mathematics}, 18(2):345--379, 2018.

\bibitem{boumal2018deterministic}
Nicolas Boumal, Vladislav Voroninski, and Afonso~S Bandeira.
\newblock Deterministic guarantees for burer-monteiro factorizations of smooth
  semidefinite programs.
\newblock {\em Communications on Pure and Applied Mathematics}, 2018.

\bibitem{candes2013phaselift}
Emmanuel~J Candes, Thomas Strohmer, and Vladislav Voroninski.
\newblock Phaselift: Exact and stable signal recovery from magnitude
  measurements via convex programming.
\newblock {\em Communications on Pure and Applied Mathematics},
  66(8):1241--1274, 2013.

\bibitem{ding2019optimal}
Lijun Ding, Alp Yurtsever, Volkan Cevher, Joel~A Tropp, and Madeleine Udell.
\newblock An optimal-storage approach to semidefinite programming using
  approximate complementarity.
\newblock {\em arXiv preprint arXiv:1902.03373}, 2019.

\bibitem{Du2017}
Yu~Du and Andrzej Ruszczy\'{n}ski.
\newblock Rate of convergence of the bundle method.
\newblock {\em J. Optim. Theory Appl.}, 173(3):908--922, June 2017.

\bibitem{goemans1995improved}
Michel~X Goemans and David~P Williamson.
\newblock Improved approximation algorithms for maximum cut and satisfiability
  problems using semidefinite programming.
\newblock {\em Journal of the ACM (JACM)}, 42(6):1115--1145, 1995.

\bibitem{mosek2010mosek}
APS Mosek.
\newblock The mosek optimization software.
\newblock {\em Online at http://www. mosek. com}, 54(2-1):5, 2010.

\bibitem{pataki1998rank}
G{\'a}bor Pataki.
\newblock On the rank of extreme matrices in semidefinite programs and the
  multiplicity of optimal eigenvalues.
\newblock {\em Mathematics of operations research}, 23(2):339--358, 1998.

\bibitem{ruszczynski2006nonlinear}
Andrzej~P Ruszczy{\'n}ski and Andrzej Ruszczynski.
\newblock {\em Nonlinear optimization}, volume~13.
\newblock Princeton university press, 2006.

\bibitem{srebro2005rank}
Nathan Srebro and Adi Shraibman.
\newblock Rank, trace-norm and max-norm.
\newblock In {\em International Conference on Computational Learning Theory},
  pages 545--560. Springer, 2005.

\bibitem{sturm2000error}
Jos~F Sturm.
\newblock Error bounds for linear matrix inequalities.
\newblock {\em SIAM Journal on Optimization}, 10(4):1228--1248, 2000.

\bibitem{tropp2017practical}
Joel~A Tropp, Alp Yurtsever, Madeleine Udell, and Volkan Cevher.
\newblock Practical sketching algorithms for low-rank matrix approximation.
\newblock {\em SIAM Journal on Matrix Analysis and Applications},
  38(4):1454--1485, 2017.

\bibitem{tropp2017randomized}
Joel~A Tropp, Alp Yurtsever, Madeleine Udell, and Volkan Cevher.
\newblock Randomized single-view algorithms for low-rank matrix approximation.
\newblock 2017.

\bibitem{yurtsever2017sketchy}
Alp Yurtsever, Madeleine Udell, Joel Tropp, and Volkan Cevher.
\newblock Sketchy decisions: Convex low-rank matrix optimization with optimal
  storage.
\newblock In {\em Artificial intelligence and statistics}, pages 1188--1196.
  PMLR, 2017.

\end{thebibliography}
	
	\appendix
	\section{Proofs of Convergence Guarantees}

\subsection{Auxiliary lemmas}\label{sec: analtical conditon}

\begin{lemma}(Sketching Guarantee)\cite[Theorem 5.1]{tropp2017randomized} \label{lem: sketchguarantees}
	Fix a target rank $r$. Let $X$ be a matrix, and let $(Y,W)$ be a sketch of $X$ of the form~\eqref{eqn: onetimeSketch}. The procedure~\eqref{eqn: reconstructionsketch} yields a rank-$r$ matrix $\hat{X}$ with 
	\[
	\EE \fronorm{X-\hat{X}} \leq 3\sqrt{2} \fronorm{X-[X]_r}.
	\]
 Here $\fronorm{\cdot}$ is the Frobenius norm. Similar operator $2$-norm bounds hold with high probability. 
\end{lemma}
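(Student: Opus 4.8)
The plan is to treat this as a standard randomized numerical linear algebra guarantee and prove it in three stages, isolating where the prescribed oversampling $k = 2r+1$ and $l = 4r+3$ enters. Write $P = QQ^*$ for the orthogonal projector onto the range of the column sketch $Y^C = X\Psi$ (so $Q$ comes from its $QR$ factorization as in \eqref{eqn: reconstructionsketch}), and recall the reconstruction sets $B = (\Phi Q)^\dagger \Phi X$ and $\hat{X} = Q[B]_r$. The three stages are: (i) the Gaussian test matrix $\Psi$ makes $P$ capture the dominant column space of $X$; (ii) the second sketch $Y^R = \Phi X$ recovers the projected matrix $PX$ accurately through the sketched solve $B$; and (iii) the rank-$r$ truncation $[B]_r$ costs only a controlled factor. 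Crucially $\hat{X} = Q[B]_r$ lies in $\mathrm{range}(Q)$, so the global error splits orthogonally, which is what keeps the final constant at $\skcons$.

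For stage (i) I would invoke the classical Gaussian range-finder bound: since $\Psi$ has $k = 2r+1 = r + (r+1)$ columns, the oversampling parameter is $p = r+1 \ge 2$ and
\[ \EE \fronorm{(I-P)X}^2 \le \left(1 + \frac{r}{k-r-1}\right)\fronorm{X-[X]_r}^2 = 2\,\fronorm{X-[X]_r}^2, \]
the equality using $k - r - 1 = r$. For stage (ii), I would first observe that $\Phi Q$ has full column rank almost surely (as $l = 4r+3 > k$ and $\Phi$ is Gaussian), so $(\Phi Q)^\dagger \Phi Q = I$ and hence $B = Q^*X + (\Phi Q)^\dagger \Phi (I-P)X$. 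Because $\hat{X}$ and $PX$ both lie in $\mathrm{range}(Q)$ while $(I-P)X$ is orthogonal to it, I can write $\fronorm{PX - QB} = \fronorm{Q^*X - B} = \fronorm{(\Phi Q)^\dagger \Phi (I-P)X}$ and then condition on $\Psi$ (hence on $Q$). Conditionally, $\Phi Q$ and $\Phi(I-P)$ act on orthogonal subspaces and are therefore independent Gaussians; combining $\EE_\Phi[\Phi(I-P)XX^*(I-P)\Phi^*] = \fronorm{(I-P)X}^2 I$ with the expected Gaussian pseudoinverse identity $\EE\fronorm{(\Phi Q)^\dagger}^2 = k/(l-k-1) = 1$ (again by the choice $l - k - 1 = k$) yields $\EE[\fronorm{PX-QB}^2 \mid \Psi] = \fronorm{(I-P)X}^2$. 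Taking total expectation gives $\EE\fronorm{X-QB}^2 = 2\,\EE\fronorm{(I-P)X}^2 \le 4\,\fronorm{X-[X]_r}^2$.

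Stage (iii) handles the truncation using the orthogonal split $\fronorm{X-\hat{X}}^2 = \fronorm{(I-P)X}^2 + \fronorm{Q^*X - [B]_r}^2$ together with the best-rank-$r$ property of $[B]_r$: a triangle-inequality argument bounds $\fronorm{Q^*X - [B]_r}$ by $2\fronorm{Q^*X - B}$ plus the rank-$r$ tail of $Q^*X$, and singular values only shrink under the projection $P$, so that tail is at most $\fronorm{X-[X]_r}$. Assembling the pieces and applying Jensen's inequality produces $\EE\fronorm{X-\hat{X}} \le \skcons\,\fronorm{X-[X]_r}$. The operator-norm version follows by replacing the range-finder and pseudoinverse moment bounds with their high-probability spectral analogues. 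I expect the main obstacle to be stage (ii): justifying the conditional independence of $\Phi Q$ and $\Phi(I-P)$, evaluating the expected squared norm of the Gaussian pseudoinverse, and then combining the (correlated) range-finding error $\fronorm{(I-P)X}$ and recovery error through the truncation of stage (iii) without letting the constant exceed $\skcons$.
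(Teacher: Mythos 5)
The paper does not actually prove this lemma: it is imported verbatim from the cited reference (Theorem~5.1 of Tropp et al.), so there is no in-paper argument to compare against. Your reconstruction follows the same three-stage route used in that source, and the individual ingredients you invoke are all correct: the Gaussian range-finder bound $\EE\fronorm{(I-P)X}^2 \le (1+\tfrac{r}{k-r-1})\fronorm{X-[X]_r}^2 = 2\fronorm{X-[X]_r}^2$; the identity $B = Q^*X + (\Phi Q)^\dagger\Phi(I-P)X$ (valid a.s.\ since $\Phi Q$ has full column rank); the conditional independence of $\Phi Q$ and $\Phi(I-P)$ by rotational invariance of the Gaussian; the pseudoinverse moment $\EE\fronorm{(\Phi Q)^\dagger}^2 = k/(l-k-1)=1$; and the domination $\sigma_j(Q^*X)\le\sigma_j(X)$ for the truncation tail.

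The one genuine soft spot is exactly the one you flag yourself: the constant. Assembling the pieces the way your stage (iii) describes --- first the triangle inequality across the orthogonal split, then $\fronorm{Q^*X-[B]_r}\le 2\fronorm{Q^*X-B}+\fronorm{X-[X]_r}$ --- yields $\EE\fronorm{X-\hat X}\le(\sqrt{2}+2\sqrt{2}+1)\fronorm{X-[X]_r}=(1+3\sqrt{2})\fronorm{X-[X]_r}$, which overshoots the claimed $\skcons$. The fix is to not discard the Pythagorean structure before taking expectations: keep $\fronorm{X-\hat X}^2=\fronorm{(I-P)X}^2+\fronorm{Q^*X-[B]_r}^2$, bound the second summand by $\left(2\fronorm{Q^*X-B}+\tau\right)^2$ with $\tau:=\fronorm{X-[X]_r}$, and apply Jensen to the sum of squares. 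Since $\EE\fronorm{Q^*X-B}^2=\EE\fronorm{(I-P)X}^2\le 2\tau^2$ and $\EE\fronorm{Q^*X-B}\le\sqrt{2}\tau$, this gives
\begin{equation*}
\EE\fronorm{X-\hat X}\;\le\;\sqrt{2\tau^2+\left(8+4\sqrt{2}+1\right)\tau^2}\;=\;\sqrt{11+4\sqrt{2}}\,\tau\;\approx\;4.08\,\tau\;\le\;\skcons\,\tau,
\end{equation*}
because $(3\sqrt{2})^2=18$. With that one adjustment your argument closes. (For this paper's downstream use in \eqref{eqn: converegncesketchsecond} and Theorem~\ref{thm:sketch}, any absolute constant would do, so even your looser $1+3\sqrt{2}$ suffices there; but as a proof of the lemma as stated, the constant matters.)
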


\begin{lemma}[Compact sublevel set]\label{lem: sublevelset}
	If a convex lower semicontinuous function $f(x): \RR^\dm \rightarrow \RR\cup \{\infty\}$ has a compact nonempty solution set, then all of its sublevel set is compact. 
\end{lemma}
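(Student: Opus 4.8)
The plan is to separate the two ingredients of compactness: closedness, which is essentially free from lower semicontinuity, and boundedness, which is where convexity together with the assumed compactness of the solution set does the work.

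First I would record the standard characterization that a function is lower semicontinuous if and only if each of its sublevel sets $S_\alpha := \{x\in\RR^\dm : f(x)\le \alpha\}$ is closed; this immediately gives closedness of every sublevel set. Since the solution set is nonempty, $f$ is proper and attains a finite minimum value $\mu := \min_x f(x)$, and the solution set is exactly the sublevel set $S_\mu$, which is compact by hypothesis. An empty sublevel set (occurring when $\alpha<\mu$) is trivially compact, so it remains only to show that every nonempty $S_\alpha$ (necessarily with $\alpha\ge\mu$) is bounded.

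For boundedness I would argue by contradiction. Fix a minimizer $x_\star\in S_\mu$ and suppose some $S_\alpha$ is unbounded, so there exist $x_k\in S_\alpha$ with $\norm{x_k-x_\star}\to\infty$. Passing to a subsequence, the unit directions $d_k := (x_k-x_\star)/\norm{x_k-x_\star}$ converge to some $d$ with $\norm{d}=1$. The crux is the following estimate, which blends convexity and lower semicontinuity. For a fixed $t>0$ and all $k$ large enough that $\norm{x_k-x_\star}>t$, the point $x_\star+td_k$ equals the convex combination $(1-\lambda_k)x_\star+\lambda_k x_k$ with $\lambda_k:=t/\norm{x_k-x_\star}\to 0$, so convexity yields
$$ f(x_\star+td_k)\le (1-\lambda_k)\mu+\lambda_k\alpha = \mu+\lambda_k(\alpha-\mu)\longrightarrow \mu. $$
Because $x_\star+td_k\to x_\star+td$, lower semicontinuity then forces $f(x_\star+td)\le\liminf_k f(x_\star+td_k)\le\mu$, hence $f(x_\star+td)=\mu$. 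As $t>0$ was arbitrary, the whole ray $\{x_\star+td : t\ge 0\}$ lies in $S_\mu$, contradicting the boundedness of the solution set.

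The main obstacle is precisely this limiting argument: neither convexity nor lower semicontinuity suffices alone. Convexity is what lets me pull the diverging iterate $x_k$ back to a fixed finite distance $t$ from $x_\star$ while keeping the function value within $\mu+\lambda_k(\alpha-\mu)$ of the minimum; lower semicontinuity is what then legitimizes passing to the limit in $k$ to conclude the limit point attains the minimum. Conceptually this is the statement that all nonempty sublevel sets of a closed proper convex function share the same recession cone, so boundedness of one (here $S_\mu$, whose recession cone is $\{0\}$) propagates to all; I would present the elementary sequential argument above rather than invoke the recession-cone machinery, to keep the proof self-contained.
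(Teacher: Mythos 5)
Your proof is correct, and at heart it follows the same strategy as the paper's: argue by contradiction that an unbounded (closed, convex) sublevel set admits a recession direction which would force the solution set to be unbounded as well. The difference is in how much is left implicit. The paper's proof simply asserts that an unbounded closed convex sublevel set $S_L$ contains a direction $\gamma$ with $x+\alpha\gamma\in S_L$ for all $x\in S_L$ and $\alpha\ge 0$, and then declares this to contradict boundedness of the solution set --- which silently uses the standard fact that all nonempty sublevel sets of a closed proper convex function share the same recession cone (the recession direction of $S_L$ need not \emph{a priori} be a recession direction of the smaller set $S_\mu$). Your sequential argument --- pulling the diverging iterates $x_k$ back to distance $t$ from a fixed minimizer via the convex combination with $\lambda_k\to 0$, and then invoking lower semicontinuity to conclude $f(x_\star+td)=\mu$ --- proves exactly that missing implication from first principles, so your write-up is self-contained where the paper's is an appeal to recession-cone machinery. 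Both are valid; yours is longer but closes the one logical jump in the paper's three-line version, and it makes visible precisely where convexity and lower semicontinuity are each used.
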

\begin{proof}
	Suppose for some $L\in \RR$, then the closed sublevel set $S_L = \{x\mid f(x)\leq L\}$ is unbounded. Then there is a unit direction vector $\gamma \in \RR$ such that for all $x \in S_L,\alpha \geq 0$, $x+\alpha \gamma \in S_L$.  This in particular violates the fact the solution set is bounded and the proof is completed. 
\end{proof}
\begin{lemma}[Quadratic Growth]\cite[Section 4]{sturm2000error} \label{lem: qg}
If the solution sets $\xsolset$ and $\ysolset$ are compact and strict complementarity holds, then for any fixed $\epsilon>0$, there are some $\gamma_1,\gamma_2$ such that for all $y$ with $F(y)\leq \inprod{-b}{\ysol}+ \epsilon$ with $\alpha = 2 \sup_{\xsol\in\xsolset} \tr(\xsol)$ , and all $X\succeq 0$ with 
$|\inprod{C}{X}- \inprod{C}{\xsol}|\leq \epsilon$ and $\twonorm{\Amap{X}-b}\leq \epsilon$:
	\[
\dist^2(y,\ysolset) \leq  \gamma _1(F(y)-F(\ysol)),\quad \dist^2(X,\xsolset)\leq \gamma_2\left(\abs{\inprod{C}{X}-\inprod{C}{\xsol}}+\twonorm{\Amap{X}-b}\right).
\]
\end{lemma}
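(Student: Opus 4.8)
\emph{Strategy and geometry.} The plan is to prove the two bounds in parallel, driven by a single geometric consequence of strict complementarity. I would fix a complementary pair $(\xsol,\ysol)$ and set $\zsol = C-\Ajmap\ysol\succeq 0$. Complementary slackness gives $\xsol\zsol=0$, and the rank condition $\rank(\xsol)+\rank(\zsol)=\dm$ then forces $\mathrm{range}(\xsol)$ and $\mathrm{range}(\zsol)$ to be orthogonal, complementary subspaces of $\RR^{\dm}$, i.e.\ $\mathrm{range}(\xsol)=\ker(\zsol)$. Let $\tau>0$ and $\sigma>0$ denote the smallest positive eigenvalues of $\xsol$ and $\zsol$; strict complementarity is exactly what makes both gaps strictly positive, and these gaps are what convert the (easy) first-order residual estimates below into the claimed quadratic growth.

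\emph{Dual bound.} Writing $Z=C-\Ajmap y$ and $[\lambda_{\min}(Z)]_-:=\max\{-\lambda_{\min}(Z),0\}$, I would first record the exact-penalty identity $F(y)-\dval=\inprod{\xsol}{Z}+\alpha[\lambda_{\min}(Z)]_-$, which follows from $F(\ysol)=\dval$ together with $b=\Amap\xsol$ and $\inprod{-C}{\xsol}=\pval=\dval$. Since $\inprod{\xsol}{Z}\ge\tr(\xsol)\lambda_{\min}(Z)$ for $\xsol\succeq0$, the choice $\alpha=2\sup_{\xsol}\tr(\xsol)$ yields both $F(y)\ge\dval$ and the feasibility control $F(y)-\dval\ge \tfrac{\alpha}{2}[\lambda_{\min}(Z)]_-$; meanwhile, on the feasible part ($Z\succeq0$) the term $\inprod{\xsol}{Z}\ge\tau\,\tr(P_{\xsol}ZP_{\xsol})$ measures the mass of $Z$ on $\mathrm{range}(\xsol)$, where the optimal $\zsol$ has none. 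I would then translate these two quantities into a bound on $\dist(y,\ysolset)$: the distance of $Z$ to the semidefinite cone, together with its off-face mass, controls its distance to $\{C-\Ajmap y':y'\in\ysolset\}$, and hence (by boundedness of $\Ajmap$ modulo its kernel) controls $\dist(y,\ysolset)$.

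\emph{Primal bound.} The mirror-image identity is $\inprod{C}{X}-\inprod{C}{\xsol}=\inprod{\ysol}{\Amap X-b}+\inprod{\zsol}{X}$, obtained from $\inprod{C}{\xsol}=\inprod{\ysol}{b}$ and $\inprod{\zsol}{\xsol}=0$. Because $X\succeq0$, this gives $0\le\inprod{\zsol}{X}\le|\inprod{C}{X}-\inprod{C}{\xsol}|+\twonorm{\ysol}\,\twonorm{\Amap X-b}$, so the right-hand side of the lemma controls $\inprod{\zsol}{X}$. Using $\zsol\succeq\sigma P_{\zsol}$ I get $\sigma\,\tr(P_{\zsol}XP_{\zsol})\le\inprod{\zsol}{X}$, i.e.\ the mass of $X$ outside $\mathrm{range}(\xsol)$ is controlled linearly by the residuals. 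A Schur-complement estimate for the block decomposition of $X$ along $\mathrm{range}(\xsol)\oplus\mathrm{range}(\zsol)$ then bounds the off-diagonal block, and hence the distance from $X$ to the optimal face $\{X\succeq0:\mathrm{range}(X)\subseteq\mathrm{range}(\xsol)\}$, by the square root of that mass; on this face $\xsolset$ is cut out by the affine system $\Amap X=b$, so a Hoffman bound controls the remaining within-face distance by $\twonorm{\Amap X-b}$. Summing the two contributions gives $\dist^2(X,\xsolset)\le\gamma_2(\cdots)$.

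\emph{Main obstacle.} The crux, and the reason the exponent is quadratic rather than linear, lies in the two places where a square root is unavoidable: bounding the off-diagonal (cross) block of $X$ (respectively $Z$) by the square root of its off-face mass through positive semidefiniteness, which is precisely where the curvature of the cone and the positive gaps $\sigma,\tau$ enter; and securing the Hoffman/metric-regularity constant on the optimal face. Equally delicate is making all constants \emph{uniform} over the stated $\epsilon$-region rather than merely pointwise near the solution sets: here I would invoke Lemma~\ref{lem: sublevelset} and the assumed compactness of $\xsolset,\ysolset$ to reduce to a compact region on which the pointwise estimates can be patched. Carrying out the facial/singularity-degree bookkeeping that makes this rigorous is exactly the content of Sturm's error-bound analysis \cite{sturm2000error}, which I would cite for the final packaging of both inequalities.
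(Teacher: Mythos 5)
Your proposal reaches the same conclusion but by a genuinely different route from the paper. The paper's proof does essentially no geometry: it treats Sturm's error-bound result as a black box and spends its entire effort verifying the one hypothesis that needs checking, namely that the regions $S_1=\{y\mid F(y)\leq \inprod{-b}{\ysol}+\epsilon\}$ and $S_2=\{X\succeq 0\mid |\inprod{C}{X}-\inprod{C}{\xsol}|\leq\epsilon,\ \twonorm{\Amap X-b}\leq\epsilon\}$ are compact — for $S_2$ this goes through the exact-penalty reformulation $\min_{X\succeq 0}\inprod{C}{X}+\gamma\twonorm{\Amap X-b}$, which shares its solution set with \eqref{p}, so Lemma~\ref{lem: sublevelset} applies. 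You instead sketch the internal mechanism of the error bound: the facial decomposition $\mathrm{range}(\xsol)\oplus\mathrm{range}(\zsol)=\RR^\dm$ forced by strict complementarity, the exact-penalty identities (both of which you state correctly), the linear control of the off-face mass by the residuals, and the square-root loss from the cross block of a PSD matrix. This buys real insight into \emph{why} strict complementarity yields exponent $2$, which the paper's proof hides entirely inside the citation. The trade-off is that your two hardest steps are stated, not proved: on the dual side, passing from ``$[\lambda_{\min}(Z)]_-$ and the off-face mass of $Z$ are small'' to a bound on $\dist(y,\ysolset)$ is precisely the error-bound content of Sturm's analysis and is not a routine deduction; and on the primal side the within-face step is not a Hoffman bound (the face of the PSD cone is not polyhedral) but a Robinson-type error bound using that $\xsol$ is positive definite relative to its range, i.e.\ a relative Slater condition on the face. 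Since you explicitly defer this bookkeeping to \cite{sturm2000error}, your argument is acceptable at the same level of rigor as the paper's; just be aware that the paper's only original contribution here is the compactness verification via Lemma~\ref{lem: sublevelset} and the penalized primal objective, which your sketch mentions only in passing and should make explicit if you want your version to stand in for the paper's proof.
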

\begin{proof}
	Using the proof of Lemma \ref{lem:dual-feas}, we find that $\lambda_{\min}(C-\Amap^*(y)) \geq -\frac{\epsilon}{\sup_{\xsol \in \xsolset}\tr{\xsol}}$. Thus $\inprod{-b}{y}\leq \inprod{-b}{\ysol}+3\epsilon$. The result in \cite[Section 4]{sturm2000error} requires the set  $S_1=\{y\mid F(y)\leq \inprod{-b}{\ysol}+ \epsilon\}$, and the set  $S_2=\{X\mid X\succeq 0,
	|\inprod{C}{X}- \inprod{C}{\xsol}|\leq \epsilon,\,\text{and}\, \twonorm{\Amap{X}-b}\leq \epsilon\}$ being compact. 
	Using \cite[Theorem 7.21]{ruszczynski2006nonlinear}, the optimization problem 
	$\min_{X\succeq 0} g(X):= \inprod{C}{X} +\gamma \twonorm{\Amap X-b}$
	has the same solution set as 
	the primal SDP \eqref{p} for some large $\gamma >0$ . Thus the compactness of the set $S_1$ and $S_2$ is ensured by Lemma \ref{lem: sublevelset}, and the proof is completed.
\end{proof}

\subsection{Proof of Theorem~\ref{thm:rates}}
Let $D_{\xsolset} \geq \sup_{\xsol\in\xsolset} \tr(\xsol)$ and $D_y \geq \sup_{F(y)\leq F(y_0)} \|y\|_2.$ 
Then we set the bundle method's parameters to be $\alpha = 2 D_{\xsolset}$, $\beta=1/2$, and $\rho = 1/D_y^2$. We recall the inner product 
for matrices is the trace inner product and is the dot product for the vectors.

In the following three lemmas, we prove bounds on primal feasibility, dual feasibility, and optimality in terms of $F(y_{t}) - F(\ysol)$. From this, we can conclude these quantities converge at the claimed rate since the Du and Ruszcynskii~\cite{Du2017} recently showed the bundle method has $F(y_{t}) - F(\ysol)$ converge at a $\widetilde O(1/\epsilon)$ rate. 
\begin{lemma}[Primal Feasibility]\label{lem:primal-feas}
	At every descent step $t$, we have approximate primal feasibility
	$$ X_{t+1}\succeq 0, $$
	$$ \|b - \Amap X_{t+1}\|^2 \leq \frac{2(F(y_{t}) - F(\ysol))}{\beta D^2_y}. $$
\end{lemma}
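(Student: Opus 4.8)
The plan is to proceed in three stages: first establish positive semidefiniteness of $X_{t+1}$ by induction, then prove an exact identity relating the primal infeasibility residual $b-\Amap X_{t+1}$ to the proximal step $z_{t+1}-y_t$, and finally combine the descent-step test with the optimality of the proximal subproblem to control the size of that step. For the first claim I would argue by induction on $t$. The base case $X_1 = 0 \succeq 0$ is immediate. For the inductive step, recall the primal recurrence $X_{t+1} = \theta_t \alpha v_t v_t^* + (1-\theta_t) X_t$; since $\theta_t \in [0,1]$, $\alpha > 0$, the rank-one matrix $v_t v_t^* \succeq 0$, and $X_t \succeq 0$ by hypothesis, $X_{t+1}$ is a nonnegative combination of positive semidefinite matrices and hence positive semidefinite.

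The crux is the identity $b - \Amap X_{t+1} = \rho(z_{t+1} - y_t)$. I would first prove the invariant $\nabla \bar{F}^t = -b + \Amap X_t$ for all $t$, again by induction, using that $\bar{F}^t$ is affine so that its gradient is a single constant vector. Because $g_t = -b + \alpha \Amap v_t v_t^*$, the aggregation update $\nabla \bar{F}^{t+1} = \theta_t g_t + (1-\theta_t)\nabla \bar{F}^t(z_t)$ and the primal recurrence carry exactly the same convex-combination coefficients; applying linearity of $\Amap$ together with the inductive hypothesis $\nabla \bar{F}^t(z_t) = -b + \Amap X_t$ yields $\nabla \bar{F}^{t+1} = -b + \Amap X_{t+1}$. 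The base case $t=1$ is handled by noting $\bar{F}^1 \equiv -\infty$ forces $\theta_1 = 1$, so $X_2 = \alpha v_1 v_1^*$ and $\nabla \bar{F}^2 = g_1 = -b + \Amap X_2$. The subproblem solution formula $z_{t+1} = y_t - \frac{1}{\rho}\left(\theta_t g_t + (1-\theta_t)\nabla \bar{F}^t(z_t)\right)$ then gives $\rho(z_{t+1}-y_t) = -\nabla \bar{F}^{t+1} = b - \Amap X_{t+1}$, which is the desired identity.

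It remains to bound $\|z_{t+1}-y_t\|$. Because $\widetilde F^t$ is a maximum of affine minorants of $F$, we have $\widetilde F^t \leq F$ pointwise; and since $z_{t+1}$ minimizes $\widetilde F^t(\cdot) + \frac{\rho}{2}\|\cdot - y_t\|^2$, evaluating this objective at the feasible point $y_t$ gives $\frac{\rho}{2}\|z_{t+1}-y_t\|^2 \le F(y_t) - \widetilde F^t(z_{t+1})$. At a descent step the acceptance test reads $\beta\left(F(y_t) - \widetilde F^t(z_{t+1})\right) \le F(y_t) - F(z_{t+1}) \le F(y_t) - F(\ysol)$, where the last inequality uses optimality of $\ysol$. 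Chaining these two inequalities yields $\|z_{t+1}-y_t\|^2 \le \frac{2}{\rho\beta}\left(F(y_t)-F(\ysol)\right)$. Squaring the identity from the previous paragraph and substituting $\rho = 1/D_y^2$ then gives $\|b - \Amap X_{t+1}\|^2 = \rho^2 \|z_{t+1}-y_t\|^2 \le \frac{2\rho}{\beta}\left(F(y_t)-F(\ysol)\right) = \frac{2\left(F(y_t)-F(\ysol)\right)}{\beta D_y^2}$, completing the argument.

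The main obstacle is the bookkeeping behind the identity $b - \Amap X_{t+1} = \rho(z_{t+1}-y_t)$: one must recognize that the primal recurrence and the aggregate-model update are driven by identical coefficients, so that the running primal variable $X_t$ is exactly the dual certificate encoded by the constant gradient $\nabla \bar{F}^t$. Once this alignment is made precise (with the mild care needed at the first iteration where $\bar{F}^1 \equiv -\infty$), the remaining feasibility bound follows routinely from the proximal subproblem's optimality and the descent test.
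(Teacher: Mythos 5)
Your proof is correct and follows essentially the same route as the paper's: the identity $b-\Amap X_{t+1}=\rho(z_{t+1}-y_t)$ via the matching coefficients of the aggregate gradient and the primal recurrence, followed by the proximal-optimality and descent-test bounds on the step length. The only difference is that you spell out the induction behind $\nabla\bar F^{t+1}=-b+\Amap X_{t+1}$ (including the $\theta_1=1$ base case), which the paper states without proof as an immediate consequence of the definition of $X_t$.
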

\begin{proof}
	Noting that $X_{t+1}$ is built out of a convex combination of the rank matrices $v_tv_t^T$, its immediate that it is always a positive semidefinite matrix.
	
	The definition of $X_t$ immediately gives the following alternative characterization of $\bar F^{t+1}$,
	$$ \bar F^{t+1}(y) = -\langle C,X_{t+1}\rangle -\langle b - \Amap X_{t+1}, y\rangle.$$
	Since we constructed $\bar F^{t+1}$ to correspond to the first-order optimality condition of the subproblem~\eqref{eq:subproblem}, we have
	$$ 0 = \nabla \bar F^{t+1}(y_{t+1}) + \rho(z_{t+1} - y_t).$$ 
	Hence $\|b - \Amap X_{t+1}\|^2 = \rho^2\|y_{t+1} - y_t\|^2$. 
	The distance traveled during any descent step can be bounded by the objective value gap as
	$$ \frac{\rho}{2}\|y_{t+1} - y_t\|^2 \leq F(y_{t})-\widetilde F^{t}(y_{t+1}) \leq \frac{F(y_{t})-F(y_{t+1})}{\beta} \leq \frac{F(y_{t})-F(\ysol)}{\beta}$$
	where the first inequality uses the fact that $z_{t+1}$ minimizes $\widetilde F^{t}(\cdot)+\frac{\rho}{2}\|\cdot-y_t\|$ and the second inequality uses the definition of a descent step.
	Combining this with our feasibility bound shows
	$$ \|b - \Amap X_{t+1}\|^2\leq \frac{2\rho(F(y_{t}) - F(\ysol))}{\beta}. $$
	Then our choice of $\rho$ completes the proof.
\end{proof}

\begin{lemma}[Dual Feasibility]\label{lem:dual-feas}
	At every descent step $t$, we have approximate dual feasibility
	$$ \lambda_{\min}(C - \Ajmap y_{t+1})\geq \frac{-(F(y_{t}) - F(\ysol))}{D_{\xsolset}}.$$
\end{lemma}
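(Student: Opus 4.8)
The plan is to prove the stronger statement that $\lambda_{\min}(C - \Ajmap y) \geq -(F(y) - F(\ysol))/D_{\xsolset}$ holds for \emph{every} $y$, and then specialize to $y = y_{t+1}$, weakening $F(y_{t+1})$ to $F(y_t)$ via the descent-step property at the end. Write $\mu := \lambda_{\min}(C - \Ajmap y_{t+1})$. Since $\ysol$ minimizes the penalized objective $F$ (this is exactly the content of the reformulation~\eqref{eq: penaltySDP} for $\alpha$ large, which holds for our choice $\alpha = 2D_{\xsolset}$), we have $F(y_t) \geq F(\ysol)$, so the right-hand side of the claim is nonpositive; hence if $\mu \geq 0$ the inequality is immediate. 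It remains to treat $\mu < 0$.

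When $\mu < 0$ the penalty term is active, so by definition $F(y_{t+1}) = \inprod{-b}{y_{t+1}} - \alpha\mu$, and since $\ysol$ is dual feasible, $F(\ysol) = \inprod{-b}{\ysol}$. The key quantitative step is a lower bound on $\inprod{\xsol}{C - \Ajmap y_{t+1}}$ from positive semidefiniteness of $\xsol$: for any symmetric $M$ and $\xsol \succeq 0$ one has $\inprod{\xsol}{M} \geq \lambda_{\min}(M)\,\tr(\xsol)$, whence $\inprod{\xsol}{C - \Ajmap y_{t+1}} \geq \mu\,\tr(\xsol)$. Expanding the left side through the adjoint identity $\inprod{\xsol}{\Ajmap y_{t+1}} = \inprod{\Amap\xsol}{y_{t+1}} = \inprod{b}{y_{t+1}}$ and invoking strong duality $\inprod{\xsol}{C} = -\pval = -\dval = \inprod{b}{\ysol}$, this rearranges into
\[
\mu(\tr(\xsol) - \alpha) \leq F(y_{t+1}) - F(\ysol).
\]

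To finish, I would use $\alpha = 2D_{\xsolset} \geq 2\tr(\xsol)$, so that $\tr(\xsol) - \alpha \leq -D_{\xsolset}$; multiplying this by $\mu < 0$ reverses the inequality and yields $-\mu D_{\xsolset} \leq F(y_{t+1}) - F(\ysol)$, which is precisely the claimed bound with $y_{t+1}$ in place of $y_t$. Since step $t$ is a descent step, $F(y_{t+1}) \leq F(y_t)$, and this only enlarges the right-hand side, giving the stated inequality. I expect the only delicate points to be the sign bookkeeping for the penalty term (verifying we are in the regime where $\min\{\lambda_{\min}(C-\Ajmap y_{t+1}),0\} = \mu$) and correctly chaining strong duality with the adjoint identity; the eigenvalue inequality $\inprod{\xsol}{M} \geq \lambda_{\min}(M)\,\tr(\xsol)$ is the crux, but it is standard.
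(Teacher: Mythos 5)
Your proof is correct and follows essentially the same route as the paper's: both hinge on the trace inequality $\inprod{\xsol}{C-\Ajmap y_{t+1}} \geq \lambda_{\min}(C-\Ajmap y_{t+1})\tr(\xsol)$, the adjoint identity $\inprod{\Amap \xsol}{y_{t+1}} = \inprod{b}{y_{t+1}}$ combined with strong duality, and the choice $\alpha = 2D_{\xsolset} \geq 2\tr(\xsol)$ to absorb the penalty term, finishing with $F(y_{t+1})\leq F(y_t)$ from the descent step. Your explicit case split on the sign of $\mu$ is handled in the paper implicitly through $\min\{\lambda_{\min},0\}$, but the argument is the same; if anything your final rearrangement $\mu(\tr(\xsol)-\alpha)\leq F(y_{t+1})-F(\ysol)$ with $\tr(\xsol)-\alpha\leq -D_{\xsolset}$ is cleaner than the paper's last display.
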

\begin{proof}
	Standard strong duality and exact penalization arguments show for any $\xsol\in\xsolset$,
	\begin{align*}
	\langle b, y_{t+1}-\ysol \rangle & = \langle \Amap\xsol, y_{t+1} \rangle- \inprod{C}{\xsol}\\
	& \leq \langle \xsol, \Ajmap(y_{t+1}-C)\rangle\\
	& \leq -\tr(\xsol)\min\{\lambda_{\min}(C - \Ajmap y_{t+1}),0\}.
	\end{align*}
	Recalling our assumption that $\alpha \geq 2D \geq 2\tr(\xsol)$ yields the claimed feasibility bound
	\begin{align*}
	F(y_{t}) - F(\ysol) \geq F(y_{t+1}) - F(\ysol) & = \langle -b, y_{t+1}-\ysol \rangle -\alpha\min\{\lambda_{\min}(C - \Ajmap y_{t+1}),0\}\\
	& \geq -\tr(\xsol)\min\{\lambda_{\min}(C - \Ajmap y_{t+1}),0\}. \qedhere
	\end{align*}
\end{proof}

\begin{lemma}[Primal-Dual Optimality]\label{lem:optimality}
	At every descent step $t$, we have approximate primal-dual optimality bounded above by
	$$\langle b, y_{t+1} \rangle - \langle C, X_{t+1}\rangle \leq \frac{\alpha}{D_{\xsolset}}(F(y_{t}) - F(\ysol)) + \sqrt{\frac{2(F(y_{t}) - F(\ysol))}{\beta}}.$$
	and below by
	$$
	\langle b, y_{t+1} \rangle - \langle C, X_{t+1}\rangle \geq -\frac{1-\beta}{\beta}(F(y_{t}) - F(\ysol)) - \sqrt{\frac{2(F(y_{t}) - F(\ysol))}{\beta}}.$$
\end{lemma}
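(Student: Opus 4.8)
The plan is to reduce both inequalities to the single scalar $\delta_t := F(y_t) - F(\ysol) \ge 0$, using the two preceding lemmas together with one exact algebraic decomposition of the gap. Write $\ell(y) = \inprod{-b}{y}$ for the linear part of $F$; by the penalty form~\eqref{eq: penaltySDP} the nonlinear term is nonnegative, so $F(y) \ge \ell(y)$ for all $y$. The starting point is the identity
\[
\inprod{b}{y_{t+1}} - \inprod{C}{X_{t+1}} = \underbrace{\inprod{b - \Amap X_{t+1}}{y_{t+1}}}_{=:T_1} - \underbrace{\inprod{C - \Ajmap y_{t+1}}{X_{t+1}}}_{=:T_2},
\]
which holds because the cross terms $\inprod{\Amap X_{t+1}}{y_{t+1}} = \inprod{X_{t+1}}{\Ajmap y_{t+1}}$ cancel. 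I will bound $T_1$ once (it enters both inequalities the same way) and bound $T_2$ from below for the upper estimate and from above for the lower estimate.

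For $T_1$, Cauchy--Schwarz gives $\abs{T_1} \le \twonorm{b - \Amap X_{t+1}}\,\twonorm{y_{t+1}}$. On a descent step $F(y_{t+1}) \le F(y_t) \le F(y_0)$, so $\twonorm{y_{t+1}} \le D_y$ by the definition of $D_y$; combined with the primal-feasibility estimate $\twonorm{b - \Amap X_{t+1}}^2 \le 2\delta_t/(\beta D_y^2)$ from Lemma~\ref{lem:primal-feas}, this yields $\abs{T_1} \le \sqrt{2\delta_t/\beta}$, exactly the square-root term appearing in both displayed inequalities. For the upper bound on the gap I then lower-bound $T_2$: since $X_{t+1}\succeq 0$ and $\inprod{M}{X_{t+1}} \ge \lambda_{\min}(M)\tr(X_{t+1})$ for every symmetric $M$, I get $T_2 \ge \lambda_{\min}(C - \Ajmap y_{t+1})\tr(X_{t+1})$. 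A short induction on the recurrence~\eqref{eq:direct-primal-recurrence}, using $\tr(v_tv_t^*)=1$ and $X_0=0$, shows $0 \le \tr(X_{t+1}) \le \alpha$; inserting the dual-feasibility bound $\lambda_{\min}(C - \Ajmap y_{t+1}) \ge -\delta_t/D_{\xsolset}$ from Lemma~\ref{lem:dual-feas} gives $T_2 \ge -(\alpha/D_{\xsolset})\delta_t$, so the gap is at most $\abs{T_1} - T_2 \le (\alpha/D_{\xsolset})\delta_t + \sqrt{2\delta_t/\beta}$.

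For the lower bound I must upper-bound $T_2$, and this is the step I expect to be the crux. Using the closed form $\bar F^{t+1}(y) = -\inprod{C}{X_{t+1}} - \inprod{b - \Amap X_{t+1}}{y}$ from the proof of Lemma~\ref{lem:primal-feas}, the same cross-term cancellation rewrites $T_2 = \ell(y_{t+1}) - \bar F^{t+1}(y_{t+1})$. The key structural fact I will invoke is that the aggregate model is tight at the proximal point, $\bar F^{t+1}(z_{t+1}) = \widetilde F^t(z_{t+1})$: indeed $\bar F^{t+1}$ is the convex combination $\theta_t\,\mathrm{cut}_t + (1-\theta_t)\bar F^t$ of the new linearization $\mathrm{cut}_t(y) = F(z_t) + \inprod{g_t}{y-z_t}$ and the previous aggregate, both of which are active at $z_{t+1}$, so $\bar F^{t+1}$ lies below $\widetilde F^t$ everywhere and agrees with it at $z_{t+1}$. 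On a descent step $y_{t+1}=z_{t+1}$ and $\ell(y_{t+1}) \le F(y_{t+1}) = F(z_{t+1})$, whence $T_2 \le F(z_{t+1}) - \widetilde F^t(z_{t+1})$. Writing $P := F(y_t) - \widetilde F^t(z_{t+1}) \ge 0$, the descent test $F(z_{t+1}) \le F(y_t) - \beta P$ gives $T_2 \le (1-\beta)P$, and the same test rearranged together with $F(z_{t+1}) \ge F(\ysol)$ gives $P \le \delta_t/\beta$. Hence $T_2 \le \tfrac{1-\beta}{\beta}\delta_t$, and the gap is at least $-\abs{T_1} - T_2 \ge -\sqrt{2\delta_t/\beta} - \tfrac{1-\beta}{\beta}\delta_t$. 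The only delicate point is justifying the tightness identity $\bar F^{t+1}(z_{t+1}) = \widetilde F^t(z_{t+1})$ cleanly in the clamped case $\theta_t = 1$; everything else is Cauchy--Schwarz plus the two earlier lemmas.
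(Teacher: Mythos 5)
Your proof is correct and follows essentially the same route as the paper: the identical decomposition of the gap into $\inprod{b-\Amap X_{t+1}}{y_{t+1}}$ and $\inprod{C-\Ajmap y_{t+1}}{X_{t+1}}$, the same Cauchy--Schwarz bound via Lemma~\ref{lem:primal-feas}, the same use of Lemma~\ref{lem:dual-feas} with $\tr(X_{t+1})\le\alpha$ for the upper bound, and the same descent-test-plus-aggregate-tightness argument ($\widetilde F^t(z_{t+1})=\bar F^{t+1}(z_{t+1})$) for the lower bound. The tightness identity you flag as delicate in the clamped case $\theta_t=1$ is used by the paper without comment, and it does hold there since the aggregate then reduces to the new cut, which must be the active piece of $\widetilde F^t$ at $z_{t+1}$ for \eqref{eq:subproblem-solution} to solve the subproblem exactly.
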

\begin{proof}
	The standard duality analysis shows the primal-dual objective gap equals
	\begin{align*}
	\langle b, y_{t+1} \rangle - \langle C, X_{t+1}\rangle &= \langle \Amap X_{t+1}, y_{t+1} \rangle - \langle C, X_{t+1}\rangle + \langle b-\Amap X_{t+1}, y_{t+1}\rangle\\
	&= \langle X_{t+1}, \Ajmap y_{t+1} - C\rangle + \langle b-\Amap X_{t+1}, y_{t+1}\rangle.
	\end{align*}
	Notice that the second term here is bounded above and below as
	$$ |\langle b-\Amap X_{t+1}, y_{t+1}\rangle| \leq \sqrt{\frac{2(F(y_{t}) - F(\ysol))}{\beta D_y^2}}\ \|y_{t+1}\|_2\leq \sqrt{\frac{2(F(y_{t}) - F(\ysol))}{\beta}}$$
	by Lemma~\ref{lem:primal-feas}. 
	Hence we only need to show that the first term also approaches zero (that is, we approach holding complementary slackness).
	
	An upper bound on this inner product follows from Lemma~\ref{lem:dual-feas} as
	$$ \langle X_{t+1}, \Ajmap y_{t+1} - C \rangle \leq -\tr(X_{t+1})\min\{\lambda_{\min}(C - \Ajmap y_{t+1}),0\} \leq \frac{\tr(X_{t+1})(F(y_{t})-F(\ysol))}{D_{\xsolset}}.$$
	Hence 
	$$\langle b, y_{t+1} \rangle - \langle C, X_{t+1}\rangle \leq \frac{\alpha}{D_{\xsolset}}(F(y_{t}) - F(\ysol)) + \sqrt{\frac{2(F(y_{t}) - F(\ysol))}{\beta}}.$$
	A lower bound on this inner product follows as
	\begin{align*}
		\frac{1-\beta}{\beta}(F(y_{t}) - F(y_{t+1})) &\geq F(y_{t+1}) - \widetilde F^{t}(y_{t+1})\\
		& = F(y_{t+1}) - \bar F^{t+1}(y_{t+1})\\
		& = -\alpha\min\{\lambda_{\min}(C-\Ajmap y_{t+1}),0\} + \langle C, X_{t+1}\rangle -\langle \Amap X_{t+1}, y_{t+1}\rangle\\
		& \geq \langle X_{t+1}, C - \Ajmap y_{t+1}\rangle,
	\end{align*}
	where the first inequality follows from the definition of a descent step. Hence
	\begin{align*}
		\langle b, y_{t+1} \rangle - \langle C, X_{t+1}\rangle &\geq -\frac{1-\beta}{\beta}(F(y_{t}) - F(\ysol)) -\sqrt{\frac{2(F(y_{t}) - F(\ysol))}{\beta}}.
	\end{align*}
\end{proof}

\subsection{Proof of Theorem~\ref{thm:sketch}}
	Using triangle inequality, we see that 
	\begin{equation}
	\begin{aligned}\label{eqn: converegncesketchfirst}
		\EE \dist(\hat{X}_t,\xsolset)\leq \EE\fronorm{\hat{X}_t-X_t} +\dist (X_t,\xsolset).
	\end{aligned}
    \end{equation} 
  The first term of \eqref{eqn: converegncesketchfirst} is bounded by 
  \[
  \EE\fronorm{\hat{X}_t -X_t} \overset{(a)}{\leq} \skcons \fronorm{X_t-[X_t]_r}\overset{(b)}{\leq} \skcons\dist(X_t,\xsolset),
  \]
   where step $(a)$ is due to Lemma \ref{lem: sketchguarantees}, and step $(b)$ is because $\xsol\in \xsolset$ all has rank less than or equal to $r$ and 
  $[X_t]_r$ is the best rank $r$ approximation of $X_t$ in terms of Frobenius norm. 
  
  Combining the above inequalities, we see that
  \begin{equation}
  \begin{aligned}\label{eqn: converegncesketchsecond}
  \EE \dist(\hat{X}_t,\xsolset)\leq (1+\skcons)\dist (X_t,\xsolset).
  \end{aligned}
  \end{equation}  
  Our task now is to have an estimate the rates convergence of $\dist(X_t,\xsolset)$. Denote the shorthand $g(X) =\abs{\inprod{C}{X}-\inprod{C}{\xsol}}+\gamma \twonorm{\Amap X -b}$. Using Lemma \ref{lem: qg}, there are some $\gamma$ such that for all $X$ with $g(X)\leq g(\xsol)+1$ satisfy 
  \begin{align}\label{eq: sketchyconvergencethird}
  \dist^2(X,\xsolset)\leq \gamma ( g(X)-g(\xsol)).
  \end{align}
  Then the theorem follows from Theorem \ref{thm:rates} as $g(X_t)-g(\xsol)$ converges at a rate of $\widetilde O(1/\epsilon^2)$.
  
  
  

\end{document}